\begin{document}
\newtheorem{theorem}[subsection]{Theorem}
\newtheorem{proposition}[subsection]{Proposition}
\newtheorem{lemma}[subsection]{Lemma}
\newtheorem{corollary}[subsection]{Corollary}
\newtheorem{conjecture}[subsection]{Conjecture}
\newtheorem{prop}[subsection]{Proposition}
\newtheorem*{remark}{Remark}
\numberwithin{equation}{section}
\renewcommand{\thefootnote}{\fnsymbol{footnote}}
\newcommand{\dif}{\mathrm{d}}
\newcommand\psum{\mathop{\sum\nolimits^{\mathrlap{*}}}}
\newcommand\dsum{\mathop{\sum\nolimits^{\mathrlap{'}}}}
\newcommand\lle{\mathop{\:\ll\:}_\varepsilon}

\title{A note on the zeros of $L$-functions associated to fixed-order Dirichlet characters}

\author[C.~C.~Corrigan]{C.~C.~Corrigan} 
\address{School of Mathematics and Statistics, University of New South Wales, Sydney, NSW 2052, Australia}
\email{c.corrigan@student.unsw.edu.au}

\date{October, 2023}
\maketitle

\begin{abstract}
    In this paper, we use the Weyl-bound for Dirichlet $L$-functions to derive zero-density estimates for $L$-functions associated to families of fixed-order Dirichlet characters.  The results improve on previous bounds given by the author when $\sigma$ is sufficiently distanced from the critical line.\\~\\\textit{Keywords:}  Dirichlet characters, Dirichlet $L$-functions, zero-density estimates.\\~\\\textit{2020 Mathematics subject classification:} 11M26, 11N35.
\end{abstract}

\section{Introduction}
    The distribution of the non-trivial zeros of Dirichlet $L$-functions is of great importance in analytic number theory.   The generalised Riemann hypothesis claims that these zeros all lie on the line $\tfrac{1}{2}+i\textbf{R}$, however, it is possible that they could lie anywhere in the region $(0,1)+i\textbf{R}$.  In practice, it often suffices to employ an unconditional result, known as a zero-density estimate, whose statement is not as strong as the generalised Riemann hypothesis.  In this paper, we let $\chi$ be a primitive Dirichlet character modulo $q$, and suppose that $\sigma\in\big(\tfrac{1}{2},1\big)$ and $T\in(2,\infty)$.  Define $R(\sigma,T)=[\sigma,1]+i[-T,T]$, and let $L(s,\chi)$ be the $L$-function associated to the character $\chi$.  Zero-density estimates are concerned with the number
    \begin{equation*}
        N(\sigma,T,\chi)=\#\{\varrho\in R(\sigma,T) : L(\varrho,\chi)=0\},
    \end{equation*}
    or rather, the average of this number over a family $\mathcal{F}$ of characters.  In this paper, the families $\mathcal{F}=\mathcal{O}_r$ of primitive Dirichlet characters of order $r$ will be the primary interest.  We denote by $\mathcal{O}_r(Q)$ the set of $\chi\in\mathcal{O}_r$ with conductor $q\in(Q,2Q]$.  Note that the generalised Riemann-von Mangoldt formula \cite{rvm1,rvm2} gives the trivial bound $N(\sigma,T,\chi)\ll T\log qT$ for any primitive Dirichlet character $\chi$, which is known to be sharp only for $\sigma=\tfrac{1}{2}$ (cf.\:\cite{bohr,selberg}).\newline
    
    The earliest zero-density estimates to feature an average over a family of Dirichlet $L$-functions are due to Bombieri \cite{bombls}, Vinogradov \cite{vinogradov}, and Montgomery \cite{HM,HM2}.  Results containing averages over $\mathcal{O}_2(Q)$ were first given by Jutila \cite{jut2} and Heath-Brown \cite{DRHB}, both of whom followed the method laid down by Montgomery to derive his results.  The method of Montgomery reduces the problem to estimating mean-values of the type
    \begin{equation*}
        \mathfrak{S}_k(Q,T)=\sum_{\chi\in\mathcal{F}(Q)}\int\limits_{-T}^T\Big|\dsum_{n\leqslant N}a_n\chi(n)n^{-it}\Big|^{2k}\:\dif t
    \end{equation*}
    and
    \begin{equation*}
        \mathfrak{L}_k(Q,T)=\sum_{\chi\in\mathcal{F}(Q)}\int\limits_{-T}^T\big|L\big(\tfrac{1}{2}+it,\chi\big)\big|^{2k}\:\dif t,
    \end{equation*}
    where $\mathcal{F}$ is the character family of interest, and the $'$ denotes that the sum is to be taken over square-free $n$.  A detailed outline of the method has been given in \cite{corrigan}.  In particular, if $\mathcal{F}=\mathcal{O}_r$ for some $r\geqslant2$, we can show that
    \begin{equation}\label{l2est}
        \mathfrak{L}_1(Q,T)\lle(QT)^{1+\varepsilon}\quad\text{when}\quad T^{2r-1}\gg Q^{2r-5}.
    \end{equation}
    Indeed, \eqref{l2est} was proven for the case $r=2$ in \cite{jut1}, the cases $r=3,4,6$ in \cite{msv}, and can be proven in the remaining cases using Theorem~1.6 of \cite{rordersieve}.  Now, the aforementioned zero-density estimate of Jutila is derived using \eqref{l2est} and a sub-optimal bound on $\mathfrak{S}_1(Q,T)$.  In \cite{corrigan,cozh}, we used the large sieve for real characters of Heath-Brown \cite{DRHB} to derive an estimate for $\mathfrak{S}_1(Q,T)$ sharper than those used by Jutila and Heath-Brown to derive their results, and consequently strengthened the estimate of Jutila to
    \begin{equation}\label{quadratic2}
        \sum_{\chi\in\mathcal{O}_2(Q)}N(\sigma,T,\chi)\lle(QT)^\varepsilon\min\Big((QT)^{(4-4\sigma)/(3-2\sigma)},\big(Q^4T^3\big)^{1-\sigma}\Big)
    \end{equation}
    in \cite{msv}.  Additionally, as analogs to \eqref{quadratic2}, we showed for $T^5\gg Q$ that
    \begin{equation}\label{msv3}
        \sum_{\chi\in\mathcal{O}_3(Q)}N(\sigma,T,\chi)\lle(QT)^\varepsilon\min\Big(Q^{(16-10\sigma)/9}T^{(4-4\sigma)/(3-2\sigma)},Q^{(16-16\sigma)/(9-6\sigma)}T^{(4-4\sigma)/(3-2\sigma)},\big(Q^4T^3\big)^{1-\sigma}\Big)
    \end{equation}
    and
    \begin{equation}\label{msv4}
        \sum_{\chi\in\mathcal{O}_4(Q)}N(\sigma,T,\chi)\lle(QT)^\varepsilon\min\Big(Q^{(5-3\sigma)/3}T^{(4-4\sigma)/(3-2\sigma)},Q^{(5-5\sigma)/(3-2\sigma)}T^{(4-4\sigma)/(3-2\sigma)},\big(Q^4T^3\big)^{1-\sigma}\Big).
    \end{equation}
    These results are derived using the large sieve for $\mathcal{O}_3$ and $\mathcal{O}_4$, respectively.  Recently, Balestrieri and Rome \cite{rordersieve} generalised the work of Baier and Young \cite{baieryoung} and Gao and Zhao \cite{gaozhao} to derive a large sieve estimate for general $\mathcal{O}_r$ where $r\geqslant2$.  Using \eqref{l2est} together with Theorem~1.6 of \cite{rordersieve}, we can show along similar lines to \eqref{msv3} and \eqref{msv4} that
    \begin{equation}\label{msvr}
        \sum_{\chi\in\mathcal{O}_r(Q)}N(\sigma,T,\chi)\lle(QT)^\varepsilon\min\Big(Q^{(6-4\sigma)/3}T^{(4-4\sigma)/(3-2\sigma)},Q^{(6-6\sigma)/(3-2\sigma)}T^{(4-4\sigma)/(3-2\sigma)},\big(Q^4T^3\big)^{1-\sigma}\Big)
    \end{equation}
    for $T^{2r-1}\gg Q^{2r-5}$.\newline
    
    Now, if we could demonstrate that the estimate \eqref{l2est} still holds when $k=2$, then we could unconditionally improve on the above zero-density estimates.  Following the method of Heath-Brown \cite{DRHB}, we can at least show for $\mathcal{F}=\mathcal{O}_r$ that
    \begin{equation}\label{l4est}
        \mathfrak{L}_2(Q,T)\lle Q^{1+\varepsilon}T^{2+\varepsilon}\quad\text{when}\quad T\gg Q,
    \end{equation}
    which is sharp in the $Q$-aspect, but unfortunately not in the $T$-aspect.  Nonetheless, we can use \eqref{l4est} to improve on \eqref{quadratic2}, \eqref{msv3}, \eqref{msv4}, and \eqref{msvr} in the $Q$-aspect.  Indeed, in \cite{cozh,corrigan} we used \eqref{l4est} to show that
    \begin{equation}\label{quadratic1}
        \sum_{\chi\in\mathcal{O}_2(Q)}N(\sigma,T,\chi)\lle(QT)^\varepsilon\min\Big(\big(Q^3T^4\big)^{(1-\sigma)/(2-\sigma)},(QT)^{(3-3\sigma)/\sigma}\Big).
    \end{equation}
    Our results in \cite{cozh,corrigan} pertaining to $\mathcal{O}_3$ and $\mathcal{O}_4$ can be improved by showing that Theorem~2.2 therein still holds under a weaker assumption on the relevant large sieve inequality, as in Lemma~\ref{montgomerylemma1} below.  Indeed, under this weaker assumption, we can strengthen the estimates in \cite{cozh} to    
    \begin{equation}\label{msv33}
        \sum_{\chi\in\mathcal{O}_3(Q)}N(\sigma,T,\chi)\lle(QT)^\varepsilon\min\Big(Q^{(16-10\sigma)/9}T^{(4-4\sigma)/(2-\sigma)},Q^{(13-13\sigma)/(6-3\sigma)}T^{(4-4\sigma)/(2-\sigma)},(QT)^{(3-3\sigma)/\sigma}\Big)
    \end{equation}
    for $T^3\gg Q^2$, and
    \begin{equation}\label{msv44}
        \sum_{\chi\in\mathcal{O}_4(Q)}N(\sigma,T,\chi)\lle(QT)^\varepsilon\min\Big(Q^{(5-3\sigma)/3}T^{(4-4\sigma)/(2-\sigma)},(QT)^{(4-4\sigma)/(2-\sigma)},(QT)^{(3-3\sigma)/\sigma}\Big),
    \end{equation}
    for $T^2\gg Q$.  Again, using Theorem~1.6 of \cite{rordersieve}, we can add to these the result
    \begin{equation}\label{msvrr}
        \sum_{\chi\in\mathcal{O}_r(Q)}N(\sigma,T,\chi)\lle(QT)^\varepsilon\min\Big(Q^{(6-4\sigma)/3}T^{(4-4\sigma)/(2-\sigma)},Q^{(5-5\sigma)/(2-\sigma)}T^{(4-4\sigma)/(2-\sigma)},(QT)^{(3-3\sigma)/\sigma}\Big),
    \end{equation}
    which is valid for $T\gg Q$.  These estimates all improve in the $Q$-aspect on their corresponding estimates obtained using \eqref{l2est}.  In general, a sharp bound for $\mathfrak{L}_{k+1}(Q,T)$ will always lead to a stronger zero-density result than a sharp bound for $\mathfrak{L}_k(Q,T)$.  To derive our main results, we consider bounds on $\mathfrak{L}_k(Q,T)$ when $k$ is arbitrarily large.
  
\section{Statement of Results}    
    Unfortunately, no sharp upper bounds have been established for $\mathfrak{L}_k(Q,T)$ when $k>2$.  In \cite{msv} we adapted the method of Heath-Brown \cite{DRHB} to show that $\mathfrak{L}_k(Q,T)\ll_\varepsilon(QT)^{k/2+\varepsilon}$, though we can obtain a better result from a much more trivial approach. Indeed, Petrow and Young \cite{weylbound1,weylbound2} showed that the Weyl-bound
    \begin{equation}\label{newlabel}
        L\big(\tfrac{1}{2}+it,\chi\big)\lle q^{1/6+\varepsilon}(|t|+1)^{1/6+\varepsilon}
    \end{equation}
    holds for any Dirichlet character $\chi$ modulo $q$, from which we derive the trivial bound
    \begin{equation}\label{weylmoment}
        \mathfrak{L}_k(Q,T)\lle(QT)^{k/3+1+\varepsilon}\quad\text{for all}\quad k\geqslant1.
    \end{equation}
    Using \eqref{weylmoment} to estimate $\mathfrak{L}_k(Q,T)$ for arbitrarily large $k$ in the method of Montgomery, we derive the following.    
    \begin{theorem}\label{thm2}
        For any $Q,T\geqslant2$, we have
        \begin{equation*}
            \sum_{\chi\in\mathcal{O}_2(Q)}N(\sigma,T,\chi)\lle(QT)^\varepsilon\min\Big((QT)^{(8-8\sigma)/3},\big(Q^8T^5\big)^{(1-\sigma)/(6\sigma-3)}\Big),
        \end{equation*}
        where $\sigma\in\big(\tfrac{1}{2},1\big)$.
    \end{theorem}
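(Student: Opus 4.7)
My approach is to apply the generalised Montgomery reduction provided by Lemma~\ref{montgomerylemma1} to the family $\mathcal{O}_2$, using the Heath-Brown large sieve for real primitive characters as the input bound on $\mathfrak{S}_1(Q,T)$, and the Weyl-bound moment estimate \eqref{weylmoment} as the input bound on $\mathfrak{L}_k(Q,T)$ for a moment order $k$ that I will take arbitrarily large.

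First I would invoke Lemma~\ref{montgomerylemma1} with a Dirichlet-polynomial length parameter $N$ and moment parameter $k$, both left free for the moment. The Heath-Brown large sieve supplies the required discrete second-moment bound
$$
    \mathfrak{S}_1(Q,T) \lle (QT + N)(QTN)^{\varepsilon}\sum_{n}|a_n|^{2}
$$
(the sum being over square-free $n$), and \eqref{weylmoment} supplies the integral bound $\mathfrak{L}_k(Q,T)\lle (QT)^{k/3+1+\varepsilon}$ uniformly in $k$. Substituting both into the conclusion of the lemma produces an explicit bound on $\sum_{\chi\in\mathcal{O}_2(Q)} N(\sigma,T,\chi)$ of the form $(QT)^{\varepsilon}$ times an expression in $N$, $k$, $Q$, $T$ and $\sigma$.

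The remainder of the argument is a two-parameter optimisation in $N$ and $k$, and two regimes emerge. In the first, letting $k\to\infty$ drives the factor $\mathfrak{L}_k(Q,T)^{1/k}$ to $(QT)^{1/3}$; when this limiting contribution is the binding one, balancing $N$ against the standard weight $N^{1-2\sigma}$ in Montgomery's framework yields the first minimand $(QT)^{(8-8\sigma)/3}$. In the complementary regime, the additive $1$ in the Weyl exponent $k/3+1$ is non-negligible, so a finite optimal $k$ must be chosen; the resulting joint optimisation gives the second minimand $\big(Q^{8}T^{5}\big)^{(1-\sigma)/(6\sigma-3)}$, with the denominator $6\sigma-3$ reflecting the characteristic appearance of $2\sigma-1$ in zero-detection mean-value arguments.

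The principal obstacle is carrying out the simultaneous optimisation in $N$ and $k$ cleanly enough that the final bound exhibits precisely the stated minimum, and verifying in each regime that the optimal $N$ lies in the admissible range required by Lemma~\ref{montgomerylemma1}. Once these verifications are complete, the theorem follows at once, with no further number-theoretic input beyond the Weyl bound \eqref{newlabel} and the Heath-Brown large sieve.
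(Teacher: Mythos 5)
Your derivation of the first minimand is essentially the paper's: apply Lemma~\ref{montgomerylemma1} with the Heath-Brown large sieve giving $\Delta(Q,T,N)\ll QT+N$ and the Weyl-moment bound \eqref{weylmoment}, let $k\to\infty$ so that $\mathfrak{L}_k(Q,T)^{1/(k+1)}\to(QT)^{1/3+\varepsilon}$, and balance $X=QT$, $Y=(QT)^{4/3}$ to get $(QT)^{(8-8\sigma)/3}$; this is precisely what Lemma~\ref{montgomery11} packages. So far so good (modulo the cosmetic slip of writing $\mathfrak{L}_k^{1/k}$ for $\mathfrak{L}_k^{1/(k+1)}$).

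The gap is in the second minimand. You claim that a \emph{finite} optimal $k$ in Lemma~\ref{montgomerylemma1} produces $\big(Q^8T^5\big)^{(1-\sigma)/(6\sigma-3)}$, but this cannot be made to work. Observe that every input you are feeding into Lemma~\ref{montgomerylemma1} — the large sieve bound $\Delta(Q,T,N)\ll QT+N$ and the moment bound $\mathfrak{L}_k(Q,T)\lle(QT)^{k/3+1+\varepsilon}$ — depends on $Q$ and $T$ only through the product $QT$. Consequently, no choice of $N$ and $k$ in Lemma~\ref{montgomerylemma1} can produce a bound that is asymmetric in $Q$ and $T$. But $\big(Q^8T^5\big)^{(1-\sigma)/(6\sigma-3)}$ gives $Q$ the exponent $8(1-\sigma)/(6\sigma-3)$ and $T$ the exponent $5(1-\sigma)/(6\sigma-3)$; these are unequal, so this term is structurally unreachable from Lemma~\ref{montgomerylemma1} with your ingredients. (Concretely, with $X=QT$ the optimisation over $Y$ gives the exponent $(2-2\sigma)(4k/3+1)/(k+2-2\sigma)$ on $QT$, which for $\sigma>\tfrac{5}{8}$ is minimised at $k\to\infty$ and for $\sigma<\tfrac{5}{8}$ at $k=1$, never producing the stated second term.) In the paper the second minimand comes from the entirely separate large-moduli reduction, Lemma~\ref{montgomerylemma2}, whose bound involves the trivial zero-count $Q^2T$ and so breaks the $Q$--$T$ symmetry; taking $k\to\infty$ there (Lemma~\ref{montgomery22}) with $(\eta,\vartheta)=(\tfrac16,\tfrac16)$ gives $\big(Q^{8/3}T^{5/3}\big)^{(1-\sigma)/(2\sigma-1)}=\big(Q^8T^5\big)^{(1-\sigma)/(6\sigma-3)}$. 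The Remark in the paper makes exactly this point. Also note that the dichotomy is not ``$k\to\infty$ versus finite $k$'': both minimands are obtained with $k$ arbitrarily large, and the true dichotomy is large-sieve zero detection versus large-moduli zero detection.
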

    The above is stronger than \eqref{quadratic1} in the $Q$-aspect when $\sigma>\tfrac{7}{8}$, and stronger in the $T$-aspect for all $\sigma>\tfrac{1}{2}$.  Additionally, it improves on \eqref{quadratic2} precisely when $\sigma>\tfrac{3}{4}$.  Using the same method as is used to prove Theorem~\ref{thm2}, we can show that the density conjecture
    \begin{equation}\label{lindelof}
        \sum_{\chi\in\mathcal{O}_2(Q)}N(\sigma,T,\chi)\lle(QT)^{2(1-\sigma)+\varepsilon}
    \end{equation}
    is a consequence of the Lindel\"of hypothesis holding for $L$-functions of real characters.\newline
    
    For cubic characters, we have the following analog to Theorem~\ref{thm2}.
    \begin{theorem}\label{thm3}
        For any $Q,T\geqslant2$, we have
        \begin{equation*}
            \sum_{\chi\in\mathcal{O}_3(Q)}N(\sigma,T,\chi)\lle(QT)^\varepsilon\min\Big(Q^{(22-16\sigma)/9}T^{(8-8\sigma)/3},Q^{4-4\sigma}T^{(8-8\sigma)/3},\big(Q^8T^5\big)^{(1-\sigma)/(6\sigma-3)}\Big),
        \end{equation*}
        where $\sigma\in\big(\tfrac{1}{2},1\big)$.  Furthermore, the above result holds if $\mathcal{O}_3$ is replaced by $\mathcal{O}_6$.
    \end{theorem}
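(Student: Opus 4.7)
The strategy mirrors that of Theorem~\ref{thm2}: we apply the generalised Montgomery reduction of Lemma~\ref{montgomerylemma1} to bound $\sum_{\chi\in\mathcal{O}_3(Q)}N(\sigma,T,\chi)$ in terms of $\mathfrak{S}_1(Q,T)$ and $\mathfrak{L}_k(Q,T)$, and then feed in the best available bound for each. For $\mathfrak{L}_k(Q,T)$ we invoke the Weyl-bound moment estimate \eqref{weylmoment}, taking $k$ arbitrarily large. Since the Weyl bound is character-independent, this step is identical to that in the proof of Theorem~\ref{thm2}, and produces the third term $\big(Q^8T^5\big)^{(1-\sigma)/(6\sigma-3)}$ of the minimum.

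For $\mathfrak{S}_1(Q,T)$ over $\mathcal{O}_3(Q)$, the plan is to invoke the cubic large sieve of Baier--Young and Gao--Zhao, equivalently Theorem~1.6 of \cite{rordersieve} specialised to $r=3$. This inequality admits two useful regimes, according to whether its conductor-dependent or its length-dependent term dominates; each, when balanced against \eqref{weylmoment} and optimised in the length $N$ of the Dirichlet polynomial, yields one of the first two members $Q^{(22-16\sigma)/9}T^{(8-8\sigma)/3}$ and $Q^{4-4\sigma}T^{(8-8\sigma)/3}$ of the minimum. The common $T^{(8-8\sigma)/3}$ factor comes entirely from the $T$-part of \eqref{weylmoment}, while the differing $Q$-exponents track the two large-sieve regimes, precisely as in the derivations of \eqref{msv3} and \eqref{msv33}, where the same two regimes combined with $\mathfrak{L}_1$ and $\mathfrak{L}_2$ estimates produce analogous pairs of $Q$-exponents.

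For the extension to $\mathcal{O}_6$, we exploit the canonical factorisation of a primitive sextic character as $\chi=\chi_2\chi_3$ into primitive characters of orders dividing $2$ and $3$, whose conductors divide that of $\chi$. Summing the relevant squared Dirichlet polynomials over $\mathcal{O}_6(Q)$ can then be controlled by applying the cubic large sieve to the $\chi_3$-factor after absorbing the $\chi_2$-twist into the coefficients; alternatively, one may verify directly that Theorem~1.6 of \cite{rordersieve} holds over $\mathcal{O}_6(Q)$ with the same exponents. Either route preserves all bounds, so the proof proceeds unchanged.

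The main obstacle is not any single step but the simultaneous tracking of three separate optimisations: the moment order $k$ (chosen tending to infinity so that \eqref{weylmoment} becomes effectively sharp), the Dirichlet polynomial length $N$, and the selection of the dominant large-sieve regime. One must verify that the resulting ranges of $\sigma$ tile $\big(\tfrac{1}{2},1\big)$ in exactly the manner asserted by the minimum, and that the boundary transitions between regimes coincide with the crossover points of the three expressions. A minor additional bookkeeping step is required in the $\mathcal{O}_6$ case to ensure that characters whose order properly divides $6$ do not enter the count with degraded exponents.
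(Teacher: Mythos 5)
Your high-level architecture matches the paper: use Lemma~\ref{montgomery11} (the $k\to\infty$ consequence of Lemma~\ref{montgomerylemma1}) with the Weyl pair $(\eta,\vartheta)=(\tfrac16,\tfrac16)$ to produce the first two terms of the minimum, and use Lemma~\ref{montgomery22} for the third. However, there is a substantive error in the large-sieve input. You assert that the cubic large sieve of Baier--Young is ``equivalently Theorem~1.6 of \cite{rordersieve} specialised to $r=3$''. This is not true, and the distinction is exactly what separates Theorem~\ref{thm3} from Theorem~\ref{thmr}. The paper's proof of Theorem~\ref{thm3} uses Theorem~1.4 of \cite{baieryoung}, which furnishes
\begin{equation*}
\Delta(Q,T,N)\ll\min\big(Q^{5/3}T+N,\;Q^{11/9}T+Q^{2/3}N\big),
\end{equation*}
whereas the general-order sieve of \cite{rordersieve} only gives
\begin{equation*}
\Delta(Q,T,N)\ll\min\big(Q^2T+N,\;Q^{4/3}T+Q^{2/3}N\big).
\end{equation*}
If you feed the latter into Lemma~\ref{montgomery11}, you obtain the weaker $Q$-exponents $(8-6\sigma)/3$ and $(14-14\sigma)/3$ of Theorem~\ref{thmr}, not the claimed $(22-16\sigma)/9$ and $4-4\sigma$. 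One can check directly that $(22-16\sigma)/9<(8-6\sigma)/3$ and $4-4\sigma<(14-14\sigma)/3$ for all $\sigma\in(\tfrac12,1)$, so the conflation loses a genuine power of $Q$. To recover the stated theorem you must work with the cubic-specific sieve and then optimise the lengths, taking $X=Q^{5/3}T$, $Y=Q^{5/3+2\eta}T^{1+2\vartheta}$ in the first regime and $X=Q^{5/9}T$, $Y=Q^{5/9+2\eta}T^{1+2\vartheta}$ in the second.

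On the $\mathcal{O}_6$ extension: the factorisation $\chi=\chi_2\chi_3$ you propose is not what the paper uses and carries hidden complications (coprimality of conductors, absorbing the quadratic twist without degrading the sieve). The paper simply observes that Theorem~1.5 of \cite{baieryoung} already provides the large sieve for $\mathcal{O}_6$ with the same exponents as the cubic case, so the argument carries over verbatim; no separate bookkeeping is needed. Your alternative of ``verifying directly that Theorem~1.6 of \cite{rordersieve} holds over $\mathcal{O}_6$'' again points at the weaker general-order result rather than the sextic-specific one, and would not yield Theorem~\ref{thm3}'s exponents.

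A smaller point: the $T^{(8-8\sigma)/3}$ factor does not come ``entirely from the $T$-part of \eqref{weylmoment}''. It arises from combining $T^{2\vartheta}$ (Weyl bound), the linear $T$ in $\Delta(Q,T,N)$, and the $T$-dependence of $Y^{1-2\sigma}$ through the choice $Y\asymp Q^{\cdot}T^{1+2\vartheta}$. The exponent is the same across the regimes only because the $T$-dependence of the large sieve is linear in every regime; this should be stated as such rather than attributed solely to the moment bound.

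\end{document}
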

    The immediate advantage of the above result, as compared to \eqref{msv3} and \eqref{msv33} is that there is no restriction on the relation between $Q$ and $T$.  Additionally, Theorem~\ref{thm3} is stronger than \eqref{msv3} in the $Q$-aspect when $\sigma>\tfrac{5}{6}$ and in the $T$-aspect when $\sigma>\tfrac{3}{4}$, and stronger than \eqref{msv33} in the $Q$-aspect when $\sigma>\tfrac{9}{10}$ and in the $T$-aspect for all $\sigma>\tfrac{1}{2}$.  The fact that Theorem~\ref{thm3} holds for $\mathcal{O}_6$ as well as $\mathcal{O}_3$ is a direct result of Theorem~1.5 of \cite{baieryoung}.  For $\mathcal{O}_4$, we derive the following slightly stronger result.
    \begin{theorem}\label{thm4}
        For any $Q,T\geqslant2$, we have
        \begin{equation*}
            \sum_{\chi\in\mathcal{O}_4(Q)}N(\sigma,T,\chi)\lle(QT)^\varepsilon\min\Big(Q^{(7-5\sigma)/3}T^{(8-8\sigma)/3},Q^{(11-11\sigma)/3}T^{(8-8\sigma)/3},\big(Q^8T^5\big)^{(1-\sigma)/(6\sigma-3)}\Big),
        \end{equation*}
        where $\sigma\in\big(\tfrac{1}{2},1\big)$.
    \end{theorem}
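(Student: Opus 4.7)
The plan is to replicate the blueprint of Theorems~\ref{thm2} and~\ref{thm3}: invoke Lemma~\ref{montgomerylemma1} with two inputs, namely (i) a large-sieve estimate for $\mathfrak{S}_1(Q,T)$ tailored to the family $\mathcal{O}_4$, and (ii) the Weyl-bound moment \eqref{weylmoment} for $\mathfrak{L}_k(Q,T)$ with $k$ taken arbitrarily large.  Since \eqref{weylmoment} is insensitive to the character family, the third term $\big(Q^8T^5\big)^{(1-\sigma)/(6\sigma-3)}$ in the minimum is expected to appear exactly as in Theorems~\ref{thm2} and~\ref{thm3}; the family dependence is carried entirely by the large-sieve input, which determines the first two terms.

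First, I would apply Montgomery's detector argument over $\mathcal{O}_4(Q)$: each zero $\varrho=\beta+i\gamma$ with $\beta\geqslant\sigma$ of an $L$-function in the family yields a lower bound for a mollified Dirichlet polynomial of length $N$, and summing the detected zeros against characters in the family produces a quantity that Cauchy--Schwarz splits into a short piece, handled by $\mathfrak{S}_1(Q,T)$, and a long piece, handled by $\mathfrak{L}_k(Q,T)$.  Second, I would feed in the quartic large sieve (the same input that produced \eqref{msv44}, contributing $Q^{(5-3\sigma)/3}$ and $Q^{(5-5\sigma)/(2-\sigma)}$ in the $Q$-aspect) to bound $\mathfrak{S}_1(Q,T)$, and \eqref{weylmoment} to bound $\mathfrak{L}_k(Q,T)$.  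Third, I would optimize the parameters $N$ and $k$ to extract the three exponents in the minimum.

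The main obstacle is the parameter optimization.  The two large-sieve contributions must be balanced against the Weyl moment, and the resulting mixed exponents are predicted to take the form $Q^{(7-5\sigma)/3}T^{(8-8\sigma)/3}$ and $Q^{(11-11\sigma)/3}T^{(8-8\sigma)/3}$; letting $k\to\infty$ then produces the third term $\big(Q^8T^5\big)^{(1-\sigma)/(6\sigma-3)}$.  The calculation, while delicate, is of the same nature as the optimizations already carried out for Theorems~\ref{thm2} and~\ref{thm3}, so the only genuinely new ingredient here is the quartic large sieve; the remainder is exponent bookkeeping.
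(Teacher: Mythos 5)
The broad outline is right — Lemma~\ref{montgomerylemma1} with a quartic large sieve and the Weyl-moment bound \eqref{weylmoment}, letting $k$ grow — but you misattribute the third term of the minimum and this is a genuine gap in your plan. You write that ``letting $k\to\infty$ then produces the third term $\big(Q^8T^5\big)^{(1-\sigma)/(6\sigma-3)}$,'' as if all three exponents arise from balancing inside Lemma~\ref{montgomerylemma1}. They do not. Taking $k\to\infty$ in Lemma~\ref{montgomerylemma1} (this is precisely what Lemma~\ref{montgomery11} records) gives terms of the shape $Q^{2\eta}T^{2\vartheta}\Delta(Q,T,X)Y^{1-2\sigma}+\Delta(Q,T,X)X^{1-2\sigma}+\Delta(Q,T,Y)Y^{1-2\sigma}$, which after optimizing $X,Y$ produces only the first two entries of the minimum. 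The third entry $\big(Q^8T^5\big)^{(1-\sigma)/(6\sigma-3)}$ comes from an entirely separate device, Montgomery's large-moduli argument: the class-2 zeros are counted purely by a Chebyshev/moment bound on $L\big(\tfrac12+it_\varrho,\chi\big)$ with no mollifier and no large sieve at all. In the paper this is Lemma~\ref{montgomerylemma2}, and its $k\to\infty$ specialization Lemma~\ref{montgomery22}, which with $(\eta,\vartheta)=(\tfrac16,\tfrac16)$ yields $\big(Q^{2+4\eta}T^{1+4\vartheta}\big)^{(1-\sigma)/(2\sigma-1)}=\big(Q^8T^5\big)^{(1-\sigma)/(6\sigma-3)}$. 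This term is deliberately independent of $\Delta$ (hence of the family), as the paper's Remark explicitly states. If you execute your plan literally, you will never see this term.

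Two smaller points. First, the large-sieve input you cite is described loosely: what is actually used is $\Delta(Q,T,N)\ll\min\big(Q^{3/2}T+N,\;Q^{7/6}T+Q^{2/3}N\big)$ from Lemma~2.10 of Gao--Zhao, and your references to $Q^{(5-3\sigma)/3}$ and $Q^{(5-5\sigma)/(2-\sigma)}$ mix up the second component of \eqref{msv44} (which is $(QT)^{(4-4\sigma)/(2-\sigma)}$, not $Q^{(5-5\sigma)/(2-\sigma)}$). Second, the splitting of the class-2 count is via H\"older's inequality for general $k$, not Cauchy--Schwarz — Cauchy--Schwarz is only the $k=1$ case. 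Neither of these small issues is fatal, but the omission of the large-moduli Lemma~\ref{montgomerylemma2} means your proposal, as written, proves only two of the three terms in the theorem.
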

    This improves the $Q$-aspect of \eqref{msv4} whenever $\sigma>\tfrac{9}{11}$ and \eqref{msv44} whenever $\sigma>\tfrac{9}{10}$.  In the $T$-aspect, improvements are made in the same regions as with Theorem~\ref{thm2} and Theorem~\ref{thm3}.  Similarly, we have the following.
    \begin{theorem}\label{thmr}
        For any $Q,T\geqslant2$ and any integer $r\geqslant2$, we have
        \begin{equation*}
            \sum_{\chi\in\mathcal{O}_r(Q)}N(\sigma,T,\chi)\lle(QT)^\varepsilon\min\Big(Q^{(8-6\sigma)/3}T^{(8-8\sigma)/3},Q^{(14-14\sigma)/3}T^{(8-8\sigma)/3},\big(Q^8T^5\big)^{(1-\sigma)/(6\sigma-3)}\Big),
        \end{equation*}
        where $\sigma\in\big(\tfrac{1}{2},1\big)$.
    \end{theorem}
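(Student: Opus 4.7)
The proof follows the now-standard adaptation of Montgomery's method via Lemma~\ref{montgomerylemma1} and mirrors the approach used for Theorems~\ref{thm2}, \ref{thm3}, and \ref{thm4}. The two main ingredients are the large sieve inequality for $\mathcal{O}_r$ due to Balestrieri and Rome (Theorem~1.6 of \cite{rordersieve}), which controls $\mathfrak{S}_1(Q,T)$ for arbitrary $r\geqslant2$, and the Weyl-based moment bound \eqref{weylmoment}, valid for any $k\geqslant1$.

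The plan is as follows. First, I would invoke Lemma~\ref{montgomerylemma1} to reduce the estimate to a combined bound involving these two inputs: each zero $\varrho=\beta+i\gamma$ of $L(s,\chi)$ in $R(\sigma,T)$ is detected either by a Dirichlet polynomial of the shape $\sum_{n\leqslant N}a_n\chi(n)n^{-\varrho}$ being large, to which the large sieve for $\mathcal{O}_r$ applies, or by $\big|L\big(\tfrac{1}{2}+i\gamma,\chi\big)\big|^{2k}$ being large, to which \eqref{weylmoment} applies. Next, I would take $k$ arbitrarily large in the moment bound; passing to the limit, the effective contribution of $\mathfrak{L}_k^{1/k}$ becomes $(QT)^{1/3}$, which is precisely what produces the improved $T$-exponent $(8-8\sigma)/3$ replacing the exponent $(4-4\sigma)/(2-\sigma)$ that appeared in \eqref{msvrr}, and simultaneously removes the constraint $T\gg Q$ present there.

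Balancing the length $N$ of the Dirichlet polynomial against the two regimes of the Balestrieri-Rome large sieve, namely the regime in which its $Q$-term dominates and the regime in which its $N$-term dominates, then yields the first two terms of the minimum, with respective $Q$-exponents $(8-6\sigma)/3$ and $(14-14\sigma)/3$. The third term $\big(Q^8T^5\big)^{(1-\sigma)/(6\sigma-3)}$ is obtained by forgoing the large sieve entirely and optimizing the polynomial length together with $k$ directly against \eqref{weylmoment}; since it depends only on the Weyl input and not on the character family, it coincides with the corresponding term in Theorem~\ref{thm2}. The main obstacle I anticipate is the parameter book-keeping inside Lemma~\ref{montgomerylemma1}: although the overall scheme is essentially identical to the proofs of the preceding theorems, one must verify that the two ``corners'' of the Balestrieri-Rome large sieve combine correctly with the large-$k$ limit of \eqref{weylmoment} to produce exponents that are uniform in $r$, so that the resulting zero-density estimate holds for every $r\geqslant2$ without any additional restriction between $Q$ and $T$.
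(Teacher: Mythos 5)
Your proposal follows the same strategy as the paper: combine Lemma~\ref{montgomerylemma1} with the Weyl moment bound \eqref{weylmoment}, let $k\to\infty$ so that the effective contribution of $\mathfrak{L}_k^{1/(k+1)}$ collapses to $Q^{1/3}T^{1/3}$ (this is precisely the content of Lemma~\ref{montgomery11} with $(\eta,\vartheta)=(\tfrac16,\tfrac16)$), balance the Dirichlet-polynomial length against the two branches of the Balestrieri--Rome large sieve bound $\Delta(Q,T,N)\ll\min(Q^2T+N,\,Q^{4/3}T+Q^{2/3}N)$ to get the first two terms of the minimum, and derive the third term via the large-moduli (sieve-free) route, which is Lemma~\ref{montgomery22}. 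This is essentially the paper's proof; the only cosmetic point is that your pairing of the two sieve branches to the exponents $(8-6\sigma)/3$ and $(14-14\sigma)/3$ is stated loosely (the $Q^{4/3}T+Q^{2/3}N$ branch, with $X=Q^{2/3}T$, is the one that yields $(8-6\sigma)/3$, while $Q^2T+N$ yields $(14-14\sigma)/3$), but the plan is otherwise correct and complete.
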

    In the $Q$-aspect, the above improves on \eqref{msvr} when $\sigma>\tfrac{5}{6}$ and on \eqref{msvrr} when $\sigma>\tfrac{9}{10}$.
    \begin{remark}
        The reason for Theorem~\ref{thm2} being the minimum of two quantities, as opposed to Theorems~\ref{thm3},~\ref{thm4},~and~\ref{thmr} being the minima of three quantities, essentially comes down to the fact that the available large sieve inequality for $\mathcal{O}_2$ is optimal, whereas this is not the case for $\mathcal{O}_r$ when $r>2$.  The available large sieve inequalities for $\mathcal{O}_r$ with $r>2$ are given as minima of four quantities, two of which are used in the proofs of the latter three theorems above.  Note, however, that the last term in the minima of the above results is derived using the large-moduli approach of Montgomery (cf.\:Theorem~3.1.3 of \cite{corrigan}), and as a result is independent on the large sieve inequalities (cf.\:\eqref{newnewlabel} below).
    \end{remark}

\section{Lemmata}
    In this section, we present the prerequisites in terms of an arbitrary family $\mathcal{F}$ of primitive Dirichlet characters.  To estimate $\mathfrak{S}_1(Q,T)$, we consider the polynomials $\Delta(Q,T,N)$ such that
    \begin{equation*}
        \mathfrak{S}_1(Q,T)\lle(QN)^\varepsilon\Delta(Q,T,N)\dsum_{n\leqslant N}|a_n|^2
    \end{equation*}
    for all $Q,T,N\geqslant2$ and any sequence $(a_n)_{n\leqslant N}$ of complex numbers.  In practice, a bound for $\Delta(Q,T,N)$ can easily be obtained from the corresponding large sieve estimate, as in \cite{corrigan}.  The method of Montgomery can then be summarised in the following two results.
    \begin{lemma}\label{montgomerylemma1}
        Suppose that $X,Y\geqslant2$ are such that $X\ll Y\ll (QT)^A$ for some absolute constant $A$.  Then  
        \begin{equation*}
            \sum_{\chi\in\mathcal{F}(Q)}N(\sigma,T,\chi)\lle(QT)^\varepsilon\Big(\big(\mathfrak{L}_k(Q,T)\Delta(Q,T,X)^kY^{k(1-2\sigma)}\big)^{1/(k+1)}+\Delta(Q,T,X)X^{1-2\sigma}+\Delta(Q,T,Y)Y^{1-2\sigma}\Big)         
        \end{equation*}
        for any $k\geqslant1$, where the implied constant does not depend on $k$.
    \end{lemma}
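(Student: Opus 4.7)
The plan is to adapt the classical Montgomery zero-detection method (cf.\ Theorem~3.1.3 of \cite{corrigan}) by using a M\"obius mollifier of length $X$, a smoothing parameter $Y$, and the $2k$-th moment bound $\mathfrak{L}_{k}(Q,T)$ in the final mean-value step. To set up, introduce $M(s,\chi)=\sum_{n\le X}\mu(n)\chi(n)n^{-s}$ and define the coefficients $b(n)$ through the formal identity $L(s,\chi)M(s,\chi)=1+\sum_{n>X}b(n)\chi(n)n^{-s}$, with $b(1)=1$, $b(n)=0$ for $1<n\le X$, and $|b(n)|\le d(n)$. For each zero $\rho=\beta+i\gamma$ of $L(\cdot,\chi)$ with $\beta\in[\sigma,1]$ and $|\gamma|\le T$, Mellin inversion with the weight $e^{-n/Y}$ yields
\begin{equation*}
1+\sum_{n>X}b(n)\chi(n)e^{-n/Y}n^{-\rho}=\frac{1}{2\pi i}\int_{(c)}L(\rho+w,\chi)M(\rho+w,\chi)\Gamma(w)Y^{w}\,\dif w,
\end{equation*}
and shifting the contour to $\Re(w)=\tfrac{1}{2}-\beta$ picks up no residue at $w=0$ since $L(\rho,\chi)M(\rho,\chi)=0$. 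Truncating the tail at $n\le Y^{1+\varepsilon}$ using the rapid decay of $\Gamma$ then produces the dichotomy that every such zero satisfies either $\big|\sum_{X<n\le Y^{1+\varepsilon}}b(n)\chi(n)e^{-n/Y}n^{-\rho}\big|\gg 1$ (case~(I)), or $\int_{|u-\gamma|\le(QT)^{\varepsilon}}|L(\tfrac{1}{2}+iu,\chi)M(\tfrac{1}{2}+iu,\chi)|\,\dif u\gg Y^{\beta-1/2}$ (case~(II)).

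For case (I) I would decompose the truncated sum dyadically into ranges of length $Z\in[X,Y^{1+\varepsilon}]$ and apply the Hal\'asz--Montgomery mean-value inequality together with the hypothesised bound $\Delta(Q,T,Z)$. This bounds the number of such zeros, summed over $\chi\in\mathcal{F}(Q)$, by $(QT)^{\varepsilon}Z^{1-2\sigma}\Delta(Q,T,Z)$, which for the usual large-sieve shape of $\Delta$ is maximised on $[X,Y]$ at one of its endpoints, producing the terms $\Delta(Q,T,X)X^{1-2\sigma}$ and $\Delta(Q,T,Y)Y^{1-2\sigma}$. For case (II) I would well-space the contributing zeros at each $\chi$ so that the intervals $[\gamma-V,\gamma+V]$ with $V=(QT)^{\varepsilon}$ are pairwise disjoint, which by the Riemann--von Mangoldt bound costs only a $(QT)^{\varepsilon}$ factor. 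Summing the inequality in (II) over the resulting $R$ zeros gives
\begin{equation*}
RY^{\sigma-1/2}\ll\sum_{\chi\in\mathcal{F}(Q)}\int_{-T-V}^{T+V}\big|L(\tfrac{1}{2}+iu,\chi)M(\tfrac{1}{2}+iu,\chi)\big|\,\dif u,
\end{equation*}
and a generalised H\"older inequality on the right-hand side, coupling $|L|^{2k}$ (controlled by $\mathfrak{L}_{k}(Q,T)$) with $|M|^{2}$ (controlled by the standard mean-square bound $\sum_{\chi}\int|M|^{2}\lle\Delta(Q,T,X)\log X$) and absorbing the remaining auxiliary weights into $(QT)^{\varepsilon}$, produces after raising to the appropriate power the bound $R^{k+1}\lle\mathfrak{L}_{k}(Q,T)\Delta(Q,T,X)^{k}Y^{k(1-2\sigma)}$, which is the first term of the claim.

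The main obstacle is this H\"older balancing step. For $k=1$ ordinary Cauchy--Schwarz suffices and recovers the $1/2$-power already implicit in Jutila's argument; for $k>1$, however, the exponents must be chosen so that the mean-square bound for $M$ is the only $M$-moment invoked, since the $2k$-th moment of $M$ would demand a coefficient sum of size $(\log X)^{k^{2}}$ and thereby destroy the $k$-independence of the implied constant promised in the statement. Matching the $2k$-th moment of $L$ against the second moment of $M$ with precisely the correct auxiliary weights, so that no spurious factor of $(QT)^{(k-1)/(2k)}$ is introduced, is what yields the characteristic $1/(k+1)$-th power in the final bound.
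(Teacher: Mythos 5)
Your overall framework coincides with the paper's: Montgomery zero detection with a M\"obius mollifier $M_X$ of length $X$ and smoothing parameter $Y$, a class~(I)/class~(II) dichotomy, the large sieve (via $\Delta$) for class~(I), and a generalised H\"older inequality matching the $2k$-th moment of $L$ against only the second moment of $M_X$ for class~(II). You also correctly identify the crux as the H\"older balancing and the danger of a spurious $(QT)^{(k-1)/(2k)}$ factor. However, the class~(II) step as you have written it cannot deliver the claimed bound, and you do not actually resolve the obstacle you name.

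Once you sum the detection inequality over the $R=\#\mathcal{R}_2$ well-spaced zeros to obtain
$RY^{\sigma-1/2}\ll\sum_{\chi\in\mathcal{F}(Q)}\int_{-T-V}^{T+V}|L(\tfrac{1}{2}+iu,\chi)M(\tfrac{1}{2}+iu,\chi)|\,\dif u$,
you have traded the count $R$ for an integral over a set of $(\chi,u)$-measure $\asymp\#\mathcal{F}(Q)\cdot T\gg(QT)^{1-\varepsilon}$. Applying the three-factor H\"older inequality with exponents $2k$, $2$ and $p$ satisfying $1/p=(k-1)/(2k)$ then forces the third factor to be this measure to the power $(k-1)/(2k)$, producing precisely the term $(QT)^{(k-1)/(2k)}$ you wished to avoid; for large $k$ this approaches $(QT)^{1/2}$ and is nowhere near absorbable into $(QT)^{\varepsilon}$. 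Raising the resulting inequality to any power gives $R^{2k}$, not $R^{k+1}$, on the left, so the claimed exponent $1/(k+1)$ cannot emerge from this route as stated. The paper avoids this by applying H\"older \emph{before} discarding the count: on the discrete sum $\sum_{(\varrho,\chi)\in\mathcal{R}_2}|LM_X|$, the third H\"older factor is $(\#\mathcal{R}_2)^{(k-1)/(2k)}$, giving
\begin{equation*}
\#\mathcal{R}_2\lle(QT)^\varepsilon Y^{k(1-2\sigma)/(k+1)}\Big(\sum_{(\varrho,\chi)\in\mathcal{R}_2}\big|M_X\big(\tfrac{1}{2}+it_\varrho,\chi\big)\big|^2\Big)^{k/(k+1)}\Big(\sum_{(\varrho,\chi)\in\mathcal{R}_2}\big|L\big(\tfrac{1}{2}+it_\varrho,\chi\big)\big|^{2k}\Big)^{1/(k+1)},
\end{equation*}
after which each moment sum is converted \emph{separately} to the integral quantities $\Delta(Q,T,X)$ and $\mathfrak{L}_k(Q,T)$ via well-spacing, losing only $(QT)^\varepsilon$. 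Equivalently, your integral inequality could be salvaged by restricting the integration to the union of the $R$ disjoint intervals $[\gamma-V,\gamma+V]$, whose total measure is $\ll RV$, so the third factor becomes $\ll(RV)^{(k-1)/(2k)}$ and the $R$'s recombine to $R^{(k+1)/(2k)}$; but integrating over the full window $[-T-V,T+V]$ forfeits exactly the information needed for this cancellation.
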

    \begin{proof}
        We demonstrated the case $k=1$ in \cite{msv}.  The remaining cases follow similarly, by using H\"older's inequality to derive the estimate
        \begin{equation*}
            \#\mathcal{R}_2\lle(QT)^\varepsilon Y^{k(1-2\sigma)/(k+1)}\Big(\sum_{(\varrho,\chi)\in\mathcal{R}_2}\big|M_X\big(\tfrac{1}{2}+it_\varrho,\chi\big)\big|^2\Big)^{k/(k+1)}\Big(\sum_{(\varrho,\chi)\in\mathcal{R}_2}\big|L\big(\tfrac{1}{2}+it_\varrho,\chi\big)\big|^{2k}\Big)^{1/(k+1)},
        \end{equation*}
        where $\mathcal{R}_2$ is as defined in \cite{msv}.
    \end{proof}
    \begin{lemma}\label{montgomerylemma2}
        For any $Q,T\geqslant2$, we have
        \begin{equation*}
            \sum_{\chi\in\mathcal{F}(Q)}N(\sigma,T,\chi)\lle(QT)^\varepsilon\Big(\big(\mathfrak{L}_k(Q,T)^2Q^{2k}T^k\big)^{(1-\sigma)/(2-k+\sigma(2k-2))}+\big(Q^2T\big)^{(1-\sigma)/(2\sigma-1)}\Big)
        \end{equation*}
        for any $k\geqslant1$, where the implied constant does not depend on $k$.
    \end{lemma}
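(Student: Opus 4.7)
The case $k=1$ is Theorem~3.1.3 of \cite{corrigan}; my plan is to extend that argument to arbitrary $k\geq 1$ by invoking H\"older's inequality at the appropriate stage, exactly as Lemma~\ref{montgomerylemma1} generalises its own $k=1$ case. The starting point is Montgomery's large-moduli zero detector: for each non-trivial zero $\varrho=\beta+i\gamma$ of $L(s,\chi)$ with $\beta\geq\sigma$ and $|\gamma|\leq T$, one combines $L(\varrho,\chi)=0$ with a Mellin--Perron identity, shifts the contour past the critical line, and applies the functional equation for $L(s,\chi)$ to obtain an approximate identity of the form
\begin{equation*}
    P(\varrho,\chi) = -\eta(\chi)\,(qT)^{1/2-\varrho}\,\widetilde{P}(1-\varrho,\bar\chi) + O\big((qT)^{-A}\big),
\end{equation*}
where $P,\widetilde{P}$ are smoothed Dirichlet polynomials of common length $\asymp (qT)^{1/2}$ and $|\eta(\chi)|=1$. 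This yields the zero-detection inequality $1\ll |P(\varrho,\chi)|^{2}+(qT)^{1-2\sigma}|\widetilde{P}(1-\varrho,\bar\chi)|^{2}$, after which the zeros are split into two disjoint classes $\mathcal{R}_{1},\mathcal{R}_{2}$ depending on which side dominates.

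The next step is to estimate $\#\mathcal{R}_{j}$. In direct analogy with the proof of Lemma~\ref{montgomerylemma1}, I would apply H\"older's inequality with conjugate exponents $k+1$ and $(k+1)/k$ to extract
\begin{equation*}
    \#\mathcal{R}_{j}\lle (QT)^{\varepsilon}\,Y^{k(1-2\sigma)/(k+1)}\Big(\sum_{(\varrho,\chi)\in\mathcal{R}_{j}}|P|^{2}\Big)^{\!k/(k+1)}\Big(\sum_{(\varrho,\chi)\in\mathcal{R}_{j}}\big|L\big(\tfrac{1}{2}+it_{\varrho},\chi\big)\big|^{2k}\Big)^{\!1/(k+1)}.
\end{equation*}
The first factor is controlled by the universal large sieve $\Delta(Q,T,N)\ll N+Q^{2}T$, which holds for any family $\mathcal{F}$, while the second is $\ll\mathfrak{L}_{k}(Q,T)$. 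Running the bound on both sides of the functional equation---once on $\mathcal{R}_{1}$ via $P$ and once on $\mathcal{R}_{2}$ via $\widetilde{P}$---contributes a factor of $\mathfrak{L}_{k}$ in each case, and together with the $(qT)^{1-2\sigma}$ coming from the functional equation, raised to an appropriate power, this accounts for both the $\mathfrak{L}_{k}^{2}$ and the $Q^{2k}T^{k}$ in the stated bound.

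Finally I would optimise the free length $Y$. Balancing the two resulting terms yields the first summand $(\mathfrak{L}_{k}^{2}Q^{2k}T^{k})^{(1-\sigma)/(2(1-\sigma)+k(2\sigma-1))}$, with the denominator $2-k+\sigma(2k-2)=2(1-\sigma)+k(2\sigma-1)$ arising from the combination of the exponent $k/(k+1)$ in the H\"older step with the $Y^{1-2\sigma}$ contributed by the large-sieve bound $Q^{2}T$ in the regime $Y\leq Q^{2}T$. The second summand $(Q^{2}T)^{(1-\sigma)/(2\sigma-1)}$ arises from a degenerate balance---precisely the one where the universal large sieve saturates and the moment bound $\mathfrak{L}_{k}$ drops out of the inequality---so it is the same for every $k$ and reproduces Montgomery's unconditional large-moduli zero-density bound.

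The main obstacle will be the careful bookkeeping of the Gamma-factor and Gauss-sum contributions produced by the contour shift and the functional equation: these must combine to give exactly $(qT)^{1/2-\varrho}$, so that after raising to the power dictated by H\"older, the clean exponent $Q^{2k}T^{k}$ appears in the first summand. Once the $k=1$ case of \cite{corrigan} has been audited, the H\"older interpolation is a purely mechanical step, symmetric on both sides of the functional equation, which is why the implied constant in the lemma is independent of $k$.
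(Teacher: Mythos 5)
Your proposal diverges from the paper's actual argument in a way that matters for the exponents. The paper's proof of Lemma~\ref{montgomerylemma2} does \emph{not} proceed by H\"older's inequality; it uses the Chebyshev-type large-values estimate
\begin{equation*}
    \#\big\{(\varrho,\chi)\in\mathcal{R}_2:\big|L\big(\tfrac{1}{2}+it_\varrho,\chi\big)\big|\geqslant V\big\}\lle(QT)^\varepsilon V^{-2k}\mathfrak{L}_k(Q,T),
\end{equation*}
and then optimises over the threshold $V$. You instead apply the H\"older step verbatim from the proof of Lemma~\ref{montgomerylemma1}, with a free polynomial length $Y$, which is the \emph{small-moduli} detector, not the large-moduli one. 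In the large-moduli set-up you yourself describe (approximate functional equation, detector polynomials of length $\asymp(qT)^{1/2}$) there is no free $Y$ to balance; the length is forced by the functional equation. So your two framing paragraphs are internally inconsistent: you invoke the large-moduli detector but run the small-moduli optimisation.

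One can check quantitatively that this route cannot land on the stated exponent. Feeding $\Delta(Q,T,X)\ll Q^2T$ into the H\"older bound $\big(\mathfrak{L}_k\Delta(Q,T,X)^kY^{k(1-2\sigma)}\big)^{1/(k+1)}$ and balancing against the residual term $Y^{2-2\sigma}$ produces a denominator $k+2-2\sigma$ in the final exponent, i.e.\ $\big(\mathfrak{L}_k^2Q^{4k}T^{2k}\big)^{(1-\sigma)/(k+2-2\sigma)}$, whereas Lemma~\ref{montgomerylemma2} asserts the denominator $2-k+\sigma(2k-2)=2(1-\sigma)+k(2\sigma-1)$ and the base $\mathfrak{L}_k^2Q^{2k}T^k$. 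Already at $k=1$ these disagree: the H\"older route gives $\big(\mathfrak{L}_1^2Q^4T^2\big)^{(1-\sigma)/(3-2\sigma)}$, the lemma asserts $\big(\mathfrak{L}_1^2Q^2T\big)^{1-\sigma}$. The difference is not a bookkeeping slip but the signature of a different zero-counting mechanism. Your appeal to ``running the bound on both sides of the functional equation'' to account for $\mathfrak{L}_k^2$ also does not close this gap: bounding $\#\mathcal{R}_1$ and $\#\mathcal{R}_2$ separately and adding produces a sum of $\mathfrak{L}_k^{1/(k+1)}$-type terms, not a product yielding a squared moment. To recover the lemma, replace the H\"older step by the dyadic large-values count quoted above (as in the $k=1$ treatment cited from \cite{msv}) and optimise over $V$; that is where the $(1-\sigma)/\big(2(1-\sigma)+k(2\sigma-1)\big)$ exponent and the $\mathfrak{L}_k^2Q^{2k}T^k$ base actually come from.
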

    \begin{proof}
        The case $k=1$ is shown in \cite{msv}.  The remaining cases follow similarly using the estimate
        \begin{equation*}
            \#\big\{(\varrho,\chi)\in\mathcal{R}_2:\big|L\big(\tfrac{1}{2}+it_\varrho,\chi\big)\big|\geqslant V\big\}\lle (QT)^\varepsilon V^{-2k}\mathfrak{L}_k(Q,T)
        \end{equation*}
        to derive a bound for $\#\mathcal{R}_2$, where $\mathcal{R}_2$ is defined as in \cite{msv}.
    \end{proof}
    In this paper, we consider the case where $k$ is taken arbitrarily large.  The above lemmata are used to derive the following two results, from which our main results follow.
    \begin{lemma}\label{montgomery11}
        Suppose that $\eta,\vartheta\geqslant0$ are constants such that the bound
        \begin{equation*}
            L\big(\tfrac{1}{2}+it,\chi\big)\lle q^{\eta+\varepsilon}(|t|+1)^{\vartheta+\varepsilon}
        \end{equation*}
        holds for all $\chi\in\mathcal{F}$, where $q$ is the conductor of $\chi$.  Then for any $Q,T\geqslant2$, we have
        \begin{equation*}
            \sum_{\chi\in\mathcal{F}(Q)}N(\sigma,T,\chi)\lle(QT)^\varepsilon\Big(Q^{2\eta} T^{2\vartheta}\Delta(Q,T,X)Y^{1-2\sigma}+\Delta(Q,T,X)X^{1-2\sigma}+\Delta(Q,T,Y)Y^{1-2\sigma}\Big),
        \end{equation*}
        where $X,Y\geqslant2$ are as in Lemma~\ref{montgomerylemma1}.
    \end{lemma}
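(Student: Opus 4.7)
My strategy is direct: feed the hypothesised pointwise Weyl-type bound into Lemma~\ref{montgomerylemma1} through the $2k$-th moment $\mathfrak{L}_k(Q,T)$, and then let $k\to\infty$. What makes this limiting argument legitimate is exactly the uniformity of the implied constant in $k$ promised in the statement of Lemma~\ref{montgomerylemma1}.

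First I would note that, since $\#\mathcal{F}(Q)\ll Q^{1+\varepsilon}$ in the cases at hand, integrating the hypothesised estimate $|L(\tfrac12+it,\chi)|^{2k}\lle q^{2k\eta+\varepsilon}(|t|+1)^{2k\vartheta+\varepsilon}$ over $|t|\leqslant T$ and summing over $\chi\in\mathcal{F}(Q)$ yields the crude bound
\begin{equation*}
\mathfrak{L}_k(Q,T)\lle Q^{1+2k\eta+\varepsilon}T^{1+2k\vartheta+\varepsilon},
\end{equation*}
valid for every $k\geqslant1$.  This is precisely the abstract analogue of \eqref{weylmoment}.

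Substituting this into the first summand on the right-hand side of Lemma~\ref{montgomerylemma1} would then produce
\begin{equation*}
\big(\mathfrak{L}_k(Q,T)\Delta(Q,T,X)^kY^{k(1-2\sigma)}\big)^{1/(k+1)}\lle Q^{(1+2k\eta)/(k+1)}T^{(1+2k\vartheta)/(k+1)}\Delta(Q,T,X)^{k/(k+1)}Y^{k(1-2\sigma)/(k+1)}.
\end{equation*}
The four exponents appearing converge as $k\to\infty$ to $2\eta$, $2\vartheta$, $1$, and $1-2\sigma$ respectively, each with discrepancy of size $O(1/k)$.  Since $\Delta(Q,T,X)\geqslant1$ is harmless and $Y\leqslant(QT)^A$, every such discrepancy inflates the product by at most a factor of $(QT)^{O(1/k)}$; hence choosing $k$ sufficiently large in terms of $A$ and $\varepsilon$ absorbs all of them into the single prefactor $(QT)^\varepsilon$.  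The remaining two summands from Lemma~\ref{montgomerylemma1} are untouched, and this gives precisely the claimed inequality.

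The only genuine obstacle, which I would scrutinise before running the above, is the uniformity of the implied constant in $k$ appearing in Lemma~\ref{montgomerylemma1}.  Were that constant allowed to grow even polynomially in $k$, then the limit $k\to\infty$ would generate an uncontrolled factor in place of $(QT)^\varepsilon$ and the argument would collapse.  Since Lemma~\ref{montgomerylemma1} is stated with an implied constant independent of $k$, this concern is taken care of for free, and no further input is required.
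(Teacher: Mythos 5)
Your proof is correct and follows essentially the same route as the paper's: derive a trivial bound for $\mathfrak{L}_k(Q,T)$ by raising the hypothesised pointwise bound to the $2k$-th power, integrating over $t$, and summing over $\chi$, then substitute into Lemma~\ref{montgomerylemma1} and send $k\to\infty$ using the $k$-uniformity of the implied constant. The only cosmetic difference is that you use $\#\mathcal{F}(Q)\ll Q^{1+\varepsilon}$ (true for the fixed-order families of interest) where the paper uses the cruder $\#\mathcal{F}(Q)\ll Q^2$ valid for arbitrary $\mathcal{F}$; since the $(k+1)$-th root absorbs any fixed power of $QT$ once $k$ is large, this makes no difference to the conclusion.
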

    \begin{proof}
        Using the trivial bound $\#\mathcal{F}(Q)\ll Q^2$ and integrating trivially over $t\in[-T,T]$, the hypothesis gives
        \begin{equation*}
            \mathfrak{L}_k(Q,T)^{1/(k+1)}\lle Q^{(2+2k\eta)/(k+1)+\varepsilon}T^{(1+2k\vartheta)/(k+1)+\varepsilon}
        \end{equation*}
        for any integer $k\geqslant1$.  Consequently,
        \begin{equation*}
            \mathfrak{L}_k(Q,T)^{1/(k+1)}\Delta(Q,T,X)^{k/(k+1)}Y^{k(1-2\sigma)/(k+1)}\lle (QT)^{(A+2)/(k+1)+\varepsilon}Q^{2\eta} T^{2\vartheta}\Delta(Q,T,X)Y^{1-2\sigma},
        \end{equation*}
        where $A$ is as in Lemma~\ref{montgomerylemma1}.  As the implied constant does not depend on $k$, we may take $k$ to be sufficiently large that $(A+2)/(k+1)\leqslant\varepsilon$.  The result then follows by Lemma~\ref{montgomerylemma1}.
    \end{proof}
    \begin{lemma}\label{montgomery22}
        Let $\eta,\vartheta\geqslant0$ be as in Lemma~\ref{montgomery11}, and suppose that $Q,T\geqslant2$.  Then
        \begin{equation*}
             \sum_{\chi\in\mathcal{F}(Q)}N(\sigma,T,\chi)\lle\big(Q^{2+4\eta}T^{1+4\vartheta}\big)^{(1-\sigma)/(2\sigma-1)+\varepsilon}
        \end{equation*}
        whenever $\sigma\geqslant\tfrac{1}{2}+\varepsilon$.
    \end{lemma}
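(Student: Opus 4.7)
The plan is to apply Lemma~\ref{montgomerylemma2} with $k$ taken to be an arbitrarily large integer, using the Weyl-type hypothesis to estimate $\mathfrak{L}_k(Q,T)$. First, combining the pointwise bound assumed in Lemma~\ref{montgomery11} with the trivial estimate $\#\mathcal{F}(Q)\ll Q^2$ and trivial integration over $t\in[-T,T]$, one obtains
\begin{equation*}
    \mathfrak{L}_k(Q,T)\lle Q^{2+2k\eta+\varepsilon}T^{1+2k\vartheta+\varepsilon}
\end{equation*}
for every integer $k\geqslant1$, exactly as in the proof of Lemma~\ref{montgomery11}. Substituting this into the first term of Lemma~\ref{montgomerylemma2} produces a quantity of the form $Q^{\alpha_k}T^{\beta_k}$, where
\begin{equation*}
    \alpha_k=\frac{(4+2k+4k\eta)(1-\sigma)}{2(1-\sigma)+k(2\sigma-1)},\qquad\beta_k=\frac{(2+k+4k\vartheta)(1-\sigma)}{2(1-\sigma)+k(2\sigma-1)}.
\end{equation*}

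The next step is to let $k\to\infty$. The condition $\sigma\geqslant\tfrac{1}{2}+\varepsilon$ ensures that $2\sigma-1\geqslant2\varepsilon>0$, so the denominator is bounded below and comparable to $k(2\sigma-1)$ for large $k$. Dividing numerators and denominators by $k$, one finds that $\alpha_k\to(2+4\eta)(1-\sigma)/(2\sigma-1)$ and $\beta_k\to(1+4\vartheta)(1-\sigma)/(2\sigma-1)$ as $k\to\infty$. Since the implied constant in Lemma~\ref{montgomerylemma2} is independent of $k$, we may choose $k$ sufficiently large that $\alpha_k$ and $\beta_k$ lie within $\varepsilon$ of their respective limits, at which point the first term of the estimate in Lemma~\ref{montgomerylemma2} is bounded by $\big(Q^{2+4\eta}T^{1+4\vartheta}\big)^{(1-\sigma)/(2\sigma-1)+\varepsilon}$, as required.

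To finish, one observes that the second term $(Q^2T)^{(1-\sigma)/(2\sigma-1)}$ appearing in Lemma~\ref{montgomerylemma2} is trivially majorised by $\big(Q^{2+4\eta}T^{1+4\vartheta}\big)^{(1-\sigma)/(2\sigma-1)}$, since $\eta,\vartheta\geqslant0$. Combining this with the previous estimate yields the stated bound. The only genuine verification is the limit computation for $\alpha_k$ and $\beta_k$ together with the absorption of the $O(1/k)$ error terms into $\varepsilon$; this is routine, and the uniformity of the implied constant in $k$ (inherited from Lemma~\ref{montgomerylemma2}) is what makes the argument go through.
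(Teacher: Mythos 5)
Your proposal is correct and is exactly the argument the paper intends: it plugs the trivial bound $\mathfrak{L}_k(Q,T)\lle Q^{2+2k\eta+\varepsilon}T^{1+2k\vartheta+\varepsilon}$ into Lemma~\ref{montgomerylemma2} and lets $k\to\infty$, mirroring the derivation of Lemma~\ref{montgomery11} from Lemma~\ref{montgomerylemma1} as the paper's one-line proof indicates. The computation of $\alpha_k$, $\beta_k$, the passage to the limit using $2\sigma-1\geqslant2\varepsilon$ and the $k$-uniformity of the implied constant, and the observation that the second term is majorised because $\eta,\vartheta\geqslant0$, are all accurate.
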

    \begin{proof}
        The result follows from Lemma~\ref{montgomerylemma2} in much the same manner as Lemma~\ref{montgomery11} from Lemma~\ref{montgomerylemma1}.
    \end{proof}

\section{Proof of the Main Results}
    To derive our main results from the above lemmata, we will employ the Weyl-bound \eqref{newlabel}.  Note that the last term in the minima of the theorems follows by taking $(\eta,\vartheta)=\big(\tfrac{1}{6},\tfrac{1}{6}\big)$ in Lemma~\ref{montgomery22}, and thus it suffices to use Lemma~\ref{montgomery11} to prove the remaining terms.
    \begin{proof}[Proof of Theorem~\ref{thm2}]
        As in \cite{cozh,msv}, we can deduce by Corollary~1 of \cite{DRHB} that
        \begin{equation*}
            \Delta(Q,T,N)\ll QT+N.
        \end{equation*}
        For appropriate $\eta,\vartheta\geqslant0$, Lemma~\ref{montgomery11} then gives
        \begin{align}\label{realproof}
            \sum_{\chi\in\mathcal{O}_2(Q)}N(\sigma,T,\chi)&\lle(QT)^\varepsilon\big(Q^{2\eta} T^{2\vartheta}(QT+X)Y^{1-2\sigma}+QTX^{1-2\sigma}+Y^{2-2\sigma}\big)\\
            &\lle Q^{(2+4\eta)(1-\sigma)+\varepsilon}T^{(2+4\vartheta)(1-\sigma)+\varepsilon}\notag
        \end{align}
        on taking $X=QT$ and $Y=Q^{1+2\eta}T^{1+2\vartheta}$, from which the assertion follows on taking $(\eta,\vartheta)=\big(\tfrac{1}{6},\tfrac{1}{6}\big)$.
    \end{proof}
    \begin{proof}[Proof of Theorem~\ref{thm3}]
        As in \cite{cozh,msv}, we can show using Theorem~1.4 of \cite{baieryoung} that
        \begin{equation*}
            \Delta(Q,T,N)\ll\min\big(Q^{5/3}T+N,Q^{11/9}T+Q^{2/3}N\big).
        \end{equation*}
        By Lemma~\ref{montgomery11}, we see for appropriate $\eta,\vartheta\geqslant0$ that
        \begin{align}\label{cubicproof}
            \sum_{\chi\in\mathcal{O}_3(Q)}N(\sigma,T,\chi)&\lle(QT)^\varepsilon\big(Q^{2\eta} T^{2\vartheta}\min(Q^{5/3}T+X,Q^{11/9}T+Q^{2/3}X\big)Y^{1-2\sigma}\\            
            &\hspace{20mm}+\min\big(Q^{5/3}TX^{1-2\sigma}+Y^{2-2\sigma}, Q^{11/9}TX^{1-2\sigma}+Q^{2/3}Y^{2-2\sigma}\big)\notag\\
            &\lle\min\big(Q^{(10/3+4\eta)(1-\sigma)+\varepsilon},Q^{2/3+(10/9+4\eta)(1-\sigma)+\varepsilon}\big)T^{(2+4\vartheta)(1-\sigma)+\varepsilon},\notag
        \end{align}
        where in the first term of the minimum we have taken
        \begin{equation*}
            X=Q^{5/3}T \quad \text{and} \quad Y=Q^{5/3+2\eta}T^{1+2\vartheta},
        \end{equation*}
        and in the second we have taken
        \begin{equation*}
            X=Q^{5/9}T \quad \text{and} \quad Y=Q^{5/9+2\eta}T^{1+2\vartheta}.
        \end{equation*}
        The desired result follows from taking $(\eta,\vartheta)=\big(\tfrac{1}{6},\tfrac{1}{6}\big)$ in \eqref{cubicproof}.
    \end{proof}
    \begin{proof}[Proof of Theorem~\ref{thm4}]
        As in \cite{msv,cozh}, Lemma~2.10 of \cite{gaozhao} can be used to show that
        \begin{equation*}
            \Delta(Q,T,N)\ll\min\big(Q^{3/2}T+N,Q^{7/6}T+Q^{2/3}N\big).
        \end{equation*}
        Then, by Lemma~\ref{montgomery11}, for appropriate $\eta,\vartheta\geqslant0$ we have
        \begin{align}\label{quarticproof}
            \sum_{\chi\in\mathcal{O}_4(Q)}N(\sigma,T,\chi)&\lle(QT)^\varepsilon\big(Q^{2\eta} T^{2\vartheta}\min(Q^{3/2}T+X,Q^{7/6}T+Q^{2/3}X\big)Y^{1-2\sigma}\\            
            &\hspace{20mm}+\min\big(Q^{3/2}TX^{1-2\sigma}+Y^{2-2\sigma}, Q^{7/6}TX^{1-2\sigma}+Q^{2/3}Y^{2-2\sigma}\big)\notag\\
            &\lle\min\big(Q^{(3+4\eta)(1-\sigma)+\varepsilon},Q^{2/3+(1+4\eta)(1-\sigma)+\varepsilon}\big)T^{(2+4\vartheta)(1-\sigma)+\varepsilon},\notag
        \end{align}
        where in the first term of the minimum we have taken
        \begin{equation*}
            X=Q^{3/2}T \quad \text{and} \quad Y=Q^{3/2+2\eta}T^{1+2\vartheta},
        \end{equation*}
        and in the second we have taken
        \begin{equation*}
            X=Q^{1/2}T \quad \text{and} \quad Y=Q^{1/2+2\eta}T^{1+2\vartheta}.
        \end{equation*}
        The assertion then follows from taking $(\eta,\vartheta)=\big(\tfrac{1}{6},\tfrac{1}{6}\big)$ in \eqref{quarticproof}.
    \end{proof}
    \begin{proof}[Proof of Theorem~\ref{thmr}]
        It follows from Theorem~1.6 of \cite{rordersieve} that
        \begin{equation*}
            \Delta(Q,T,N)\ll\min\big(Q^2T+N,Q^{4/3}T+Q^{2/3}N\big).
        \end{equation*}
        Then, by Lemma~\ref{montgomery11}, for appropriate $\eta,\vartheta\geqslant0$ we have
        \begin{align}\label{rproof}
            \sum_{\chi\in\mathcal{O}_4(Q)}N(\sigma,T,\chi)&\lle(QT)^\varepsilon\big(Q^{2\eta} T^{2\vartheta}\min(Q^2T+X,Q^{4/3}T+Q^{2/3}X\big)Y^{1-2\sigma}\\            
            &\hspace{20mm}+\min\big(Q^2TX^{1-2\sigma}+Y^{2-2\sigma}, Q^{4/3}TX^{1-2\sigma}+Q^{2/3}Y^{2-2\sigma}\big)\notag\\
            &\lle\min\big(Q^{(4+4\eta)(1-\sigma)+\varepsilon},Q^{2/3+(4/3+4\eta)(1-\sigma)+\varepsilon}\big)T^{(2+4\vartheta)(1-\sigma)+\varepsilon},\notag
        \end{align}
        where in the first term of the minimum we have taken
        \begin{equation*}
            X=Q^2T \quad \text{and} \quad Y=Q^{2+2\eta}T^{1+2\vartheta},
        \end{equation*}
        and in the second we have taken
        \begin{equation*}
            X=Q^{2/3}T \quad \text{and} \quad Y=Q^{2/3+2\eta}T^{1+2\vartheta}.
        \end{equation*}
        The proof is complete on taking $(\eta,\vartheta)=\big(\tfrac{1}{6},\tfrac{1}{6}\big)$ in \eqref{rproof}.
    \end{proof}
    It is clear from \eqref{realproof} how the density conjecture for real characters \eqref{lindelof} follows from the Lindel\"of hypothesis.  However, an analogous result of the same strength cannot be established for $\mathcal{O}_3$, $\mathcal{O}_4$, or $\mathcal{O}_r$ using \eqref{cubicproof}, \eqref{quarticproof}, or \eqref{rproof}, respectively.  Additionally, it is clear that the bound derived from Lemma~\ref{montgomery22} has no dependence on the character family $\mathcal{F}$.  Indeed, using the above method, we can show that
    \begin{equation}\label{newnewlabel}
        \sum_{q\leqslant Q}\psum_{\chi\bmod{q}}N(\sigma,T,\chi)\ll(QT)^\varepsilon\min\Big(Q^{(14-14\sigma)/3}T^{(8-8\sigma)/3},Q^{(8-8\sigma)/(6\sigma-3)}T^{(5-5\sigma)/(6\sigma-3)}\Big),
    \end{equation}
    which improves on Theorem~12.2 of \cite{HM}, and confirms the density conjecture for $\sigma>\tfrac{11}{12}$.  Heath-Brown \cite{heathy}, however, was able to show that the density conjecture holds in the larger range $\sigma>\tfrac{11}{14}$.
    
\vspace*{.5cm}

\noindent{\bf Acknowledgments.}  The author would like to thank the University of New South Wales for access to some of the resources that were necessary to complete this paper, as well as the Commonwealth for its support through an Australian Government Research Training Program Scholarship.  Thanks is also given to Dr.~Liangyi~Zhao who brought the article \cite{rordersieve} to the author's attention, and to the referee for their comprehensive review of the original draft.

\bibliography{secondmoment}

@book{HM,
author={H. L. Montgomery},
title={{T}opics in {M}ultiplicative {N}umber {T}heory},
publisher={Spring-Verlag},
volume={227},
series={Lecture Notes in Mathematics},
address={Berlin},
year={1971},
}

@misc{corrigan,
author={C. C. Corrigan},
title={On the distribution of zeros for families of {D}irichlet ${L}$-functions},
institution={The University of New South Wales},
year={2022},
note={(Honours Thesis)},
}

@article{jut1,
author={M. Jutila},
title={On mean values of {$L$}-functions and short character sums with real characters},
journal={Acta Arith.},
volume={26},
year={1975},
pages={405–410},
}

@article{jut2,
author={M. Jutila},
title={On mean values of {D}irichlet polynomials with real characters},
journal={Acta Arith.},
volume={27},
year={1975},
pages={191-198},
}

@article{DRHB,
    AUTHOR = {Heath-Brown, D. R.},
     TITLE = {A mean value estimate for real character sums},
   JOURNAL = {Acta Arith.},
  FJOURNAL = {Acta Arithmetica},
    VOLUME = {72},
      YEAR = {1995},
     PAGES = {235-275},
}

@article{baieryoung,
    AUTHOR = {Baier, S. and Young, M. P.},
     TITLE = {Mean values with cubic characters},
   JOURNAL = {J. Number Theory},
  FJOURNAL = {Journal of Number Theory},
    VOLUME = {130},
      YEAR = {2010},
     PAGES = {879-903},
}

@article{gaozhao,
    AUTHOR = {Gao, P. and Zhao, L.},
     TITLE = {Moments of central values of quartic {D}irichlet {$L$}-functions},
   JOURNAL = {J. Number Theory},
  FJOURNAL = {Journal of Number Theory},
    VOLUME = {228},
      YEAR = {2021},
     PAGES = {342-358},
}

@article{cozh,
author={C. C. Corrigan and L. Zhao},
title={Zero density theorems for families of {D}irichlet ${L}$-functions},
journal={Bull. Aust. Math. Soc.},
volume={108},
year={2023},
pages={224-235},
}

@article{HM2,
author={H. L. Montgomery},
title={Zeros of {$L$}-functions},
journal={Invent. Math.},
volume={8},
year={1969},
pages={346–354},
}

@article{bombls,
author={E. Bombieri},
title={On the large sieve},
journal={Mathematika},
volume={12},
year={1965},
pages={201-225},
}

@article{vinogradov,
author={A. I. Vinogradov},
title={On the density hypothesis for {D}irichlet {$L$}-series},
journal={Izv. Akad. Nauk SSSR Ser. Mat.},
volume={29},
year={1965},
pages={903–934},
}

@article{msv,
author={C. C. Corrigan},
title={Mean square values of {D}irichlet {$L$}-functions associated to fixed-order characters},
journal={J. Number Theory},
volume={256},
year={2024},
pages={8-22},
}

@book{rvm1,
author={K. Prachar},
title={Primzahlverteilung},
series={Grundlehren der mathematischen Wissenschaften, Band 91},
publisher={Springer},
year={1957},
}

@book{rvm2,
author={H. Davenport},
title={Multiplicative number theory},
edition={3},
series={Graduate Texts in Mathematics, vol. 74},
publisher={Springer},
year={2000},
}

@article{weylbound1,
author={I. Petrow and M. P. Young},
title={The {W}eyl bound for {D}irichlet {$L$}-functions of cube-free conductor},
journal={Annals Math.},
volume={192},
year={2020},
pages={437-486},
}

@article{weylbound2,
author={I. Petrow and M. P. Young},
title={The fourth moment of {D}irichlet {$L$}-functions along a coset and the {W}eyl bound},
journal={Duke Math. J.},
volume={172},
year={2023},
pages={1879-1960},
}

@article{heathy,
author={D. R. Heath-Brown},
title={The density of zeros of {D}irichlet's {$L$}-functions},
journal={Can. J. Math.},
volume={31},
year={1979},
pages={231-240},
}

@article{rordersieve,
author={F. Balestrieri and N. Rome},
title={Average {B}ateman-{H}orn for {K}ummer polynomials},
journal={Acta Arith.},
volume={207},
year={2023},
pages={315-350},
}

@article{bohr,
author={H. A. Bohr and E. G. H. Landau},
title={Ein {S}atz {\"u}ber {D}irichletsche {R}eihen mit {A}nwendung auf die $\zeta$-{F}unktion und die {$L$}-{F}unktionen},
journal={Rend. Circ. Mat. Palermo},
volume={37},
year={1914},
pages={269-272},
}

@article{selberg,
author={A. Selberg},
title={On the zeros of {R}iemann’s zeta-function},
journal={Skr. Norske Vid. Akad. Oslo I, no. 10},
year={1942},
pages={1-59},
}
\bibliographystyle{amsxport}

\end{document}